\newtheorem{theorem}{Theorem}[section]
\newtheorem{lemma}[theorem]{Lemma}
\newtheorem{proposition}[theorem]{Proposition}
\theoremstyle{definition}
\theoremstyle{remark}
\numberwithin{equation}{section}
\begin{document}

\title [Tuples of complex triangular Toeplitz matrices]{Dynamics of tuples of complex upper triangular Toeplitz matrices}

\author[G. Costakis]{G. Costakis}
\address{Department of Mathematics, University of Crete, Knossos Avenue, GR-714 09 Heraklion, Crete, Greece}
\email{costakis@math.uoc.gr}
\thanks{}

\author[D. Hadjiloucas]{D. Hadjiloucas}
\address{The School of Sciences, European University Cyprus, 6 Diogenes Street, Engomi, P.O.Box 22006, 1516 Nicosia, Cyprus}
\email{d.hadjiloucas@euc.ac.cy}
\thanks{}

\author[A. Manoussos]{A. Manoussos}
\address{Fakult\"{a}t f\"{u}r Mathematik, SFB 701, Universit\"{a}t Bielefeld, Postfach 100131, D-33501 Bielefeld, Germany}
\email{amanouss@math.uni-bielefeld.de}
\thanks{The second author would like to thank European University Cyprus for the financial
support received in carrying out part of this work under the internal research grant for the project titled
"Dynamics of Tuples of Matrices". During this research the third author was fully supported by SFB 701
``Spektrale Strukturen und Topologische Methoden in der Mathematik" at the University of Bielefeld, Germany. He
would also like to express his gratitude to Professor H. Abels for his support.}

\date{}

\subjclass[2000]{47A16}

\keywords{Hypercyclic operators, Toeplitz matrices, tuples of matrices.}

\begin{abstract}
Based on a question raised by N. Feldman we discuss the dynamics of tuples of upper triangular Toeplitz matrices over $\mathbb{C}$. Some open problems are posed.
\end{abstract}
\maketitle

\section{Introduction}
In the last twenty years the subject of hypercyclicity has been studied intensively. Hypercyclicity is a phenomenon occurring only in infinite dimensional topological
vector spaces, see for example \cite{BM}. That is, if for a continuous linear operator $T$ acting on a topological vector space $X$ there exists a vector $x\in X$
whose orbit under $T$ is dense in $X$, i.e. $$\overline{Orb(T,x)}=\overline{ \{ x, Tx, T^2x, \dots \} }=X$$ then $X$ is infinite dimensional. However, in
\cite{Feldman2}, \cite{Feldman}, Feldman showed that hypercyclic tuples of matrices may exist in finite dimensional vector spaces. Namely, there exist $n\times n$
diagonal matrices $T_1, \ldots ,T_{n+1}$ with complex entries and vector $x\in \mathbb{C}^n$ such that the set
$$ \{ T_1^{k_1}\cdots T_{n+1}^{k_{n+1}}x: k_1,\dots ,k_{n+1}\in \mathbb{N}\cup \{ 0\} \}$$ is dense in
$\mathbb{C}^n$. Such vectors $x\in \mathbb{C}^n$ are called hypercyclic for the tuple $(T_1, \ldots ,T_{n+1})$. In addition, Feldman proved that no $n$-tuple of
diagonal $n\times n$ matrices with complex entries is hypercyclic on $\mathbb{C}^n$. Complementing Feldman's result we showed in \cite{CoHaMa} that there exist
hypercyclic $k$-tuples of non simultaneously diagonalizable (commuting) $n\times n$ matrices with $k>n$ on $\mathbb{C}^n$. Hence the following question due to Feldman
\cite{Feldman2} naturally arises.

\medskip
\noindent \textbf{Question} \textit{Is there a hypercyclic (non simultaneously diagonalizable) $n$-tuple on $\mathbb{C}^n$?}

\medskip
The purpose of the present work is twofold. On one hand we give a negative answer to the aforementioned question in the case where at least one of the matrices in the
tuple is cyclic. On the other hand we pose some (related) open problems on the dynamics of tuples of upper triangular Toeplitz matrices as well as raise questions
concerning minimal actions. For simplicity we demonstrate our method, which is completely elementary, for the cases $n=2,3,4$. Recently, Ayadi in \cite{A} has
answered Feldman's question in full generality using more sophisticated tools than ours, such as his previous very interesting work on abelian subgroups of
$GL(n,\mathbb{C})$, see \cite{AM1}, \cite{AM2}. We would also like to mention that H. Abels and the third author, among other things, were able to find the minimal
number of matrices either on $\mathbb{C}^n$ or $\mathbb{R}^n$ (the real case is much harder) that form a hypercyclic tuple, using algebraic group theory, see
\cite{AM}. As a result, they provide yet another approach to Feldman's question.

For an account of results on hypercyclicity we refer to the survey articles \cite{GE}, \cite{GE2}, \cite{GE3} and the recent books \cite{BM}, \cite{GP}. For results
on the dynamics of tuples of matrices see \cite{AM}, \cite{CoHaMa}, \cite{Feldman1}, \cite{Feldman}, \cite{J1}, \cite{J2}, \cite{Ker}.

\section{A reduction to Toeplitz matrices}
Let us first fix some notation. For $m\in \mathbb{N}$ and $\lambda \in \mathbb{C}$ the symbol $J_m(\lambda )$ stands for a $m\times m$ matrix in Jordan form with
eigenvalue $\lambda $. Let $A$ be a $n\times n$ matrix with eigenvalues $\lambda_1,\ldots ,\lambda_k$ and multiplicities $n_1, \ldots ,n_k$ respectively. Then $A$ is
similar to a matrix $B$, where $B$ is the standard Jordan canonical form of $A$. Up to permutation, $B$ has the unique structure
$$B=J_{n_1}(\lambda_1)\oplus \ldots \oplus J_{n_k}(\lambda_k).$$
The next two lemmata are well known, with easy proofs, so we just state them for completeness.
\begin{lemma} \label{commute1}
Let $J,A$ be $n\times n$ commuting matrices over $\mathbb{C}$ such that $J=J_n(\lambda )$, $\lambda \in \mathbb{C}$. Then
\[
A=\left(
\begin{array}{llllcl}
a_{1} & a_{2} & a_{3} & a_{4} & \cdots & a_{n}\\
0 & a_{1} & a_{2} & a_{3} & \cdots & a_{n-1}\\
0 & 0 & a_{1} & a_{2} & \cdots & a_{n-2}\\
0 & 0 & 0 & a_{1} & \cdots & a_{n-3}\\
\vdots & \vdots & \vdots & & \ddots & \vdots\\
0& 0& 0& 0 & \cdots & a_{1}
\end{array} \right),
\]
where $a_1,\cdots ,a_n \in \mathbb{C}$.
\end{lemma}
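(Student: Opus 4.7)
The plan is to reduce the commutation relation $AJ=JA$ to a commutation with the nilpotent shift. Write $J_n(\lambda)=\lambda I+N$, where $N$ is the $n\times n$ matrix with $1$'s on the superdiagonal and $0$'s elsewhere. Since $\lambda I$ commutes with every matrix, $A$ commutes with $J$ if and only if $AN=NA$, so it suffices to determine all matrices commuting with $N$.

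The key observation is that left multiplication by $N$ and right multiplication by $N$ act in simple, transparent ways on the entries of $A$. A direct computation from the definition of $N$ gives
\[
(AN)_{ij}=\begin{cases}A_{i,j-1}&\text{if }j\geq 2,\\ 0&\text{if }j=1,\end{cases}\qquad (NA)_{ij}=\begin{cases}A_{i+1,j}&\text{if }i\leq n-1,\\ 0&\text{if }i=n.\end{cases}
\]
Equating these entry by entry, I would first read off the boundary conditions: $j=1$ forces $A_{i+1,1}=0$ for $1\leq i\leq n-1$, i.e.\ the entire first column below $A_{11}$ vanishes; $i=n$ forces $A_{n,j-1}=0$ for $2\leq j\leq n$, i.e.\ the entire last row to the left of $A_{nn}$ vanishes.

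Next I would use the interior relation $A_{i,j-1}=A_{i+1,j}$ valid for $1\leq i\leq n-1$ and $2\leq j\leq n$. This says that moving one step down and one step to the right leaves the entry of $A$ unchanged. Iterating this shift along any diagonal, every diagonal of $A$ is constant. Combining with the boundary vanishing in the previous step (which kills every diagonal that lies strictly below the main diagonal), one concludes that $A$ is upper triangular with constant diagonals, i.e.\ of the claimed Toeplitz form with parameters $a_1=A_{11},\,a_2=A_{12},\,\ldots,\,a_n=A_{1n}$.

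There is no real obstacle here; the only thing to be careful about is handling the boundary cases $j=1$ and $i=n$ correctly so that the Toeplitz shift relation is not misapplied at the edges of the matrix. Everything else is a direct bookkeeping argument on the indices.
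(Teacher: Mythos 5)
Your argument is correct and complete: the reduction $J=\lambda I+N$, the entrywise computation of $AN$ and $NA$, and the resulting boundary and shift relations give exactly the claimed upper triangular Toeplitz form. The paper omits the proof entirely (it declares the lemma "well known, with easy proofs"), and yours is precisely the standard argument the authors are alluding to.
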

Observe that the matrix $A$ in the above lemma is a (triangular) Toeplitz matrix.
\begin{lemma} \label{commute2}
Let $A,B$ be $n\times n$ commuting matrices over $\mathbb{C}$ such that $B$ is in a Jordan canonical form, i.e. $B=J_{n_1}(\lambda_1)\oplus \ldots \oplus
J_{n_k}(\lambda_k)$, for certain $\lambda_1, \ldots ,\lambda_k\in \mathbb{C}$ and $n_1+\ldots +n_k=n$ and assume in addition that $\lambda_m\neq \lambda_j$ for $m\neq
j$. Then $A$ has the form $A=A_1\oplus \ldots \oplus A_k$, where $A_j$ is a $n_j\times n_j$ (triangular) Toeplitz matrix as in Lemma \ref{commute1}.
\end{lemma}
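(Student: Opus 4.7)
The plan is to write $A$ in block form compatible with the Jordan block decomposition of $B$, derive a Sylvester-type commutation relation for each block, and then separately handle diagonal and off-diagonal blocks.

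First, I partition $A = (A_{ij})_{1 \le i,j \le k}$ where $A_{ij}$ is an $n_i \times n_j$ matrix placed in the block row/column of $A$ corresponding to $J_{n_i}(\lambda_i)$ and $J_{n_j}(\lambda_j)$ in $B$. The relation $AB = BA$ is block-diagonal on the $B$ side, so multiplying out gives, for every pair $(i,j)$,
\[
A_{ij}\, J_{n_j}(\lambda_j) \;=\; J_{n_i}(\lambda_i)\, A_{ij}.
\]

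For the diagonal blocks $i=j$ this is exactly the hypothesis of Lemma \ref{commute1} applied to $A_{ii}$ and $J_{n_i}(\lambda_i)$, so each $A_{ii}$ is automatically an $n_i \times n_i$ upper triangular Toeplitz matrix of the form displayed there. The main content of the lemma is therefore to show that all off-diagonal blocks $A_{ij}$ with $i\neq j$ vanish; this is the step I expect to require the hypothesis $\lambda_i \neq \lambda_j$ in an essential way.

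For $i \neq j$, I exploit the fact that the commutation relation propagates to polynomials: iterating it gives
\[
p\bigl(J_{n_i}(\lambda_i)\bigr)\, A_{ij} \;=\; A_{ij}\, p\bigl(J_{n_j}(\lambda_j)\bigr)
\]
for every polynomial $p \in \mathbb{C}[x]$. I choose $p(x) = (x - \lambda_j)^{n_j}$, which annihilates $J_{n_j}(\lambda_j)$, so the right-hand side is zero. On the left-hand side, $p\bigl(J_{n_i}(\lambda_i)\bigr) = \bigl((\lambda_i - \lambda_j)I + N\bigr)^{n_j}$ where $N$ denotes the nilpotent part of $J_{n_i}(\lambda_i)$; this matrix is upper triangular with the nonzero constant $(\lambda_i-\lambda_j)^{n_j}$ on the diagonal, hence invertible. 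Multiplying $p(J_{n_i}(\lambda_i))A_{ij}=0$ by its inverse yields $A_{ij}=0$.

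Putting the two cases together, $A$ is block diagonal with blocks $A_j := A_{jj}$ of the required triangular Toeplitz form, i.e.\ $A = A_1 \oplus \cdots \oplus A_k$. The only non-routine ingredient is the invertibility argument for the off-diagonal vanishing, and it is precisely there that the assumption of pairwise distinct eigenvalues is used.
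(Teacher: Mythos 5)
Your proof is correct and complete: the block partition, the relation $A_{ij}J_{n_j}(\lambda_j)=J_{n_i}(\lambda_i)A_{ij}$, the reduction of the diagonal blocks to Lemma \ref{commute1}, and the annihilating-polynomial/invertibility argument for the off-diagonal blocks are all exactly the standard route. The paper itself gives no proof (it dismisses both lemmata as ``well known, with easy proofs''), so there is nothing to compare against; your write-up supplies precisely the argument the authors are implicitly invoking.
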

The above lemmata point out that in order to understand the dynamics of tuples of commuting matrices for which at least one of them is cyclic, it suffices to know how
the iterates of (triangular) Toeplitz matrices behave. Let us briefly explain why. Suppose that $B_1, \ldots ,B_n$ are complex $n\times n$ commuting matrices and
assume that one of them is cyclic, say $B_1$. Then there is a similarity transformation matrix $P$ such that $P^{-1}B_1P=J_{n_1}(\lambda_1)\oplus \ldots \oplus
J_{n_k}(\lambda_k)$, for certain $\lambda_1, \ldots ,\lambda_k\in \mathbb{C}$, $n_1+\ldots +n_k=n$ and $\lambda_m\neq \lambda_j$ for $m\neq j$. In other words
$P^{-1}B_1P$ is just the Jordan canonical form of $B_1$. Observe now that the matrices $P^{-1}B_1P, \ldots ,P^{-1}B_nP$ are pairwise commuting and by Lemmata
\ref{commute1}, \ref{commute2} we have $P^{-1}B_jP=A_{j,1}\oplus \ldots \oplus A_{j,k}$ for $j=2,\ldots ,n$ and each $A_{j,\rho }$ is a $n_j\times n_j$ (triangular)
Toeplitz matrix, $ j=2,\ldots ,n$, $\rho =1,\ldots ,k$. It is easy to check that hypercyclicity is preserved under similarity. Therefore, we conclude that the tuple
$$(B_1,\ldots ,B_n)$$ is hypercyclic if and only if the tuple
$$(J_{n_1}(\lambda_1)\oplus \ldots \oplus J_{n_k}(\lambda_k), A_{2,1}\oplus \ldots \oplus A_{2,k}, \ldots , A_{n,1}\oplus \ldots \oplus A_{n,k} )$$ is hypercyclic.
It is also useful to observe that
$$ (A_{j,1}\oplus \ldots \oplus A_{j,k})^m= A_{j,1}^m\oplus \ldots \oplus A_{j,k}^m, \,\,\, m\in \mathbb{N} .$$
Hence, Toeplitz matrices come naturally into play when dealing with dynamics of commuting matrices for which at least one of them is cyclic. We shall prove the
following theorem.
\begin{theorem}
Suppose that $B_1,\ldots ,B_n$ are pairwise commuting complex $n\times n$ matrices so that $B_j$ is cyclic for some $j\in \{1,\ldots ,n\}$. Then the tuple $(B_1,
\ldots ,B_n)$ is not hypercyclic on $\mathbb{C}^n$.
\end{theorem}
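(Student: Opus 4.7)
The plan is first to apply the reduction just established: by conjugating the tuple with the invertible matrix $P$ from the discussion preceding the theorem (an operation that preserves hypercyclicity), I may assume
\[
B_1 = J_{n_1}(\lambda_1) \oplus \cdots \oplus J_{n_k}(\lambda_k),\qquad \lambda_\rho\text{'s distinct},\quad n_1+\cdots+n_k = n,
\]
and, by Lemmata \ref{commute1} and \ref{commute2}, each $B_j = A_{j,1} \oplus \cdots \oplus A_{j,k}$ with $A_{j,\rho}$ an upper triangular Toeplitz matrix of size $n_\rho$. Write $\mu_{j,\rho}$ for the diagonal entry of $A_{j,\rho}$. If some $\mu_{j,\rho}$ vanishes then $A_{j,\rho}$ is nilpotent, the $\rho$-block of $B_j^{k_j}$ dies for $k_j \ge n_\rho$, and the orbit collapses to a finite union of proper algebraic subsets, which cannot be dense; so I may assume $\mu_{j,\rho}\neq 0$ throughout. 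I may likewise assume that the candidate hypercyclic vector $x$ has nonzero last coordinate in every block, otherwise the orbit lies in a coordinate hyperplane.

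Next I would put each Toeplitz block in exponential form, $A_{j,\rho} = \mu_{j,\rho}\exp(L_{j,\rho})$ with $L_{j,\rho} = \sum_{s=1}^{n_\rho-1} \ell_{j,\rho}^{(s)} N_\rho^s$ a nilpotent Toeplitz matrix (here $N_\rho$ denotes the shift of size $n_\rho$). Commutativity gives
\[
B_1^{k_1}\cdots B_n^{k_n}\big|_\rho = \Lambda_\rho\,\exp\!\Big(\sum_{s=1}^{n_\rho -1}\xi_\rho^{(s)} N_\rho^s\Big),\qquad
\Lambda_\rho := \prod_j\mu_{j,\rho}^{k_j},\ \ \xi_\rho^{(s)} := \sum_j k_j\,\ell_{j,\rho}^{(s)}.
\]
A short calculation then shows that on the open dense subset $U\subset\mathbb{C}^n$ of vectors whose last coordinate in each block is nonzero, the map $y\mapsto(\Lambda_1,\ldots,\Lambda_k,\xi_1^{(1)},\ldots,\xi_k^{(n_k-1)})$ is a biholomorphism $U\to(\mathbb{C}^*)^k\times\mathbb{C}^{n-k}$.

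The crux is a convex-cone obstruction. Composing this biholomorphism with the smooth submersion $(w,\eta)\mapsto(\log|w|,\operatorname{Re}\eta,\operatorname{Im}\eta)$ yields a continuous, open, surjective map $f:U\to\mathbb{R}^{2n-k}$. On the orbit it is $\mathbb{R}$-linear in $(k_1,\ldots,k_n)$,
\[
f(B_1^{k_1}\cdots B_n^{k_n} x) \;=\; \sum_{j=1}^{n} k_j\,w_j,\qquad w_j := \big(\log|\mu_{j,\rho}|,\ \operatorname{Re}\ell_{j,\rho}^{(s)},\ \operatorname{Im}\ell_{j,\rho}^{(s)}\big)_{\rho,s} \in\mathbb{R}^{2n-k},
\]
so the image of the orbit under $f$ sits inside the positive convex cone generated by $w_1,\ldots,w_n$. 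A positive spanning set of $\mathbb{R}^d$ must contain at least $d+1$ elements (exactly $d$ linearly independent vectors admit no nontrivial nonnegative linear relation, while representing $-w_1$ as a nonnegative combination of $w_1,\ldots,w_n$ forces such a relation); since $2n-k\ge n$, the $n$ vectors $w_j$ cannot positively span $\mathbb{R}^{2n-k}$. Their cone is therefore a proper convex subset of $\mathbb{R}^{2n-k}$, and no proper convex subset of Euclidean space is dense (the closure of a convex set is convex, and the only convex subset dense in $\mathbb{R}^d$ is $\mathbb{R}^d$ itself). Pulling a disjoint open set back through the open map $f$ produces a nonempty open subset of $\mathbb{C}^n$ missed entirely by the orbit.

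The main obstacle I expect is to assemble the multiplicative parameters $\Lambda_\rho\in\mathbb{C}^*$ and the additive nilpotent parameters $\xi_\rho^{(s)}\in\mathbb{C}$ into a single $\mathbb{R}$-linear picture to which the convex-cone argument applies. Passing to $\log|\Lambda_\rho|$ instead of $\log\Lambda_\rho$ is what makes this work: it discards the angular coordinates $\arg\Lambda_\rho$ (which live on a torus and could otherwise create Kronecker-style dense windings that would spoil convexity), but preserves the real-linear structure and, crucially, the dimension count $n\le 2n-k$ that traps the orbit in a proper cone. Verifying the openness of $f$ and the existence of the biholomorphic inversion $y\leftrightarrow(\Lambda,\xi)$ are elementary but rely squarely on the nonvanishing $\mu_{j,\rho}\neq 0$, which is precisely why the nilpotent subcase is disposed of at the outset.
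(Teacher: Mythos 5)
Your argument is correct, and it diverges from the paper's at the decisive step. Both proofs share the same skeleton: conjugate so that the cyclic matrix is in Jordan form, invoke Lemmata \ref{commute1} and \ref{commute2} to make every $B_j$ block-diagonal with upper triangular Toeplitz blocks, and reparametrize each block of the product $B_1^{k_1}\cdots B_n^{k_n}$ by a scalar $\Lambda_\rho=\prod_j\mu_{j,\rho}^{k_j}$ together with coefficients that are \emph{linear} in $(k_1,\ldots,k_n)$ --- your $\xi_\rho^{(s)}=\sum_j k_j\ell_{j,\rho}^{(s)}$ are exactly the combinations appearing in Lemmata \ref{coef3} and \ref{coef4} (e.g.\ $\frac{a_{j,3}}{a_{j,1}}-\frac12\frac{a_{j,2}^2}{a_{j,1}^2}$ is the $N^2$-coefficient of $\log(\mu_{j,\rho}^{-1}A_{j,\rho})$), and your biholomorphism is the content of Lemmata \ref{equivhyp2}--\ref{equivhyp4}. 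Where the paper then exponentiates the linear forms to manufacture an $n$-tuple of diagonal matrices and quotes Feldman's theorem that no such tuple is hypercyclic on $\mathbb{C}^n$ --- and carries this out explicitly only for $n=2,3,4$ --- you instead pass to $(\log|\Lambda_\rho|,\operatorname{Re}\xi,\operatorname{Im}\xi)\in\mathbb{R}^{2n-k}$ and observe that the image of the orbit lies in the convex cone on the $n$ vectors $w_1,\ldots,w_n$, which cannot positively span since $2n-k\ge n$. This buys you a self-contained proof, uniform in $n$ (in effect you have folded a proof of Feldman's diagonal result into the general case), at the cost of reproving rather than citing it; the paper's route is shorter per dimension but is only sketched beyond $n=2$ and leans on an external theorem.

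One small repair is needed in your disposal of the nilpotent case. If some $\mu_{j,\rho}=0$, the orbit does not obviously collapse to ``a finite union of proper algebraic subsets'': the slices with $k_j=m$ fixed are orbits of $B_j^m x$ under the remaining $(n-1)$-tuple and are not a priori contained in algebraic sets. With the paper's convention $k_i\in\mathbb{N}$ the point is immediate, since every orbit vector then has vanishing last coordinate in block $\rho$. If exponents $k_i=0$ are allowed, note instead that the slices with $k_j\ge 1$ all lie in the hyperplane $\{y_{\rho,\mathrm{last}}=0\}$, so density of the orbit forces the slice $k_j=0$ to be dense, i.e.\ the $(n-1)$-tuple obtained by deleting $B_j$ is hypercyclic; iterating this deletion until no zero diagonal entries remain, your cone argument applies with even fewer generators and yields the contradiction. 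This is a two-line fix and does not affect the substance of the proof.
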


\section{Iterates of Toeplitz matrices}

Let $n\in\mathbb{N}$ and consider the $n\times n$ Toeplitz matrices
\[
A_i=\left(
\begin{array}{llllcl}
a_{i,1} & a_{i,2} & a_{i,3} & a_{i,4} & \cdots & a_{i,n}\\
0 & a_{i,1} & a_{i,2} & a_{i,3} & \cdots & a_{i,n-1}\\
0 & 0 & a_{i,1} & a_{i,2} & \cdots & a_{i,n-2}\\
0 & 0 & 0 & a_{i,1} & \cdots & a_{i,n-3}\\
\vdots & \vdots & \vdots & & \ddots & \vdots\\
0& 0& 0& 0 & \cdots & a_{i,1}
\end{array} \right)
\]
for $i=1,2,\ldots,m$. We first begin by illustrating how to determine the elements of $A_i^k$ for some power $k$ without needing to go through the laborious process
of raising $A_i$ to the power $k$. Define the $n\times n$ nilpotent shift matrices $U_p$ for $p=0,1,2,\ldots$ by setting
\[
u_{i,j}=\left\{
\begin{array}{cc}
1, & j=i+p\\
0, & \textrm{otherwise}.
\end{array}\right.
\]
Observe that $U_0$ is the $n\times n$ identity matrix and $U_p=U_1^p$ which implies that $U_p U_q=U_q U_p = U_{p+q}$ for any $p,q\in\{0,1,2,\ldots\}$. Also, $U_p$ is
the $n\times n$ zero matrix for any $p\geq n$. As a result we have that
\[
A_i=\sum_{j=1}^n a_{i,j} U_{j-1}.
\]
It now follows from the multinomial theorem that
\begin{eqnarray*}
A_i^k &=& \left(\sum_{j=1}^n a_{i,j} U_{j-1}\right)^k\\
      &=& \sum_ {\substack{k_{i,1},\ldots, k_{i,n} \\
      k_{i,1}+ \ldots + k_{i,n}=k}} \binom{k}{k_{i,1},\ldots, k_{i,n}} a_{i,1}^{k_{i,1}} \ldots a_{i,n}^{k_{i,n}}\; U_1^{k_{i,2}+2 k_{i,3}+\ldots +(n-1) k_{i,n}}
\end{eqnarray*}
where
\[
\binom{k}{k_{i,1}, k_{i,2}, \ldots, k_{i,n}}=\frac{k!}{k_{i,1}! k_{i,2}!\ldots k_{i,n}!}
\]
is the multinomial coefficient. This example illustrates the ease with which one can compute the entries of the matrix where now attention is paid on the set of
configurations which make the expression $k_{i,2}+2 k_{i,3}+\ldots +(n-1) k_{i,n}$ equal to a given integer which by nature is more of a combinatorial problem.

Let us now turn to the product $A_1^{k_1}\ldots A_m^{k_m}$ which is
\begin{eqnarray*}
\sum_{\substack{k_{1,1},\ldots, k_{1,n} \\ k_{1,1}+\ldots + k_{1,n}=k_1}}\ldots & &\sum_{\substack{k_{m,1}\ldots k_{m,n} \\k_{m,1}+\ldots + k_{m,n}=k_m}}
\prod_{j=1}^{m}\binom{k_j}{k_{j,1},\ldots, k_{j,n}} a_{j,1}^{k_{j,1}}\ldots a_{j,n}^{k_{j,n}}\times\\
& & \times U_1^{k_{1,2}+\ldots + k_{m,2}+2(k_{1,3}+\ldots+k_{m,3})+\ldots+(n-1)(k_{1,n}+\ldots+k_{m,n})}.
\end{eqnarray*}
The only difficulty in such a computation is on determining the set of configurations which make the expression
\[
k_{1,2}+\ldots + k_{m,2}+2(k_{1,3}+\ldots+k_{m,3})+\ldots+(n-1)(k_{1,n}+\ldots+k_{m,n})
\]
equal to a given integer.

\section{2$\times$2 matrices}
For this section let $B_1$, $B_2$ be $2\times 2$ non-simultaneously diagonalizable commuting matrices such that $B_1$ is cyclic. Hence $B_1$ has only one eigenvalue
and the same is true for $B_2$ as well. Then from the analysis in section 2, there exists a $2\times 2$ similarity transformation matrix $P$ such that both
$A_1=P^{-1}B_1P$, $A_2=P^{-1}B_2P$ are triangular Toeplitz matrices, i.e.
\[
A_1=\left(
\begin{array} {ll}
a_{1,1} & a_{1,2}\\
0 & a_{1,1}
\end{array}
\right) , A_2=\left(
\begin{array} {ll}
a_{2,1} & a_{2,2}\\
0 & a_{2,1}
\end{array}
\right).
\]
Then for a multi-index $k=(k_1,k_2)$, $k_1,k_2\in \mathbb{N}$ define
\[
A(k)=\left(
\begin{array} {ll}
a_1(k) & a_2(k)\\
0 & a_1(k)
\end{array}
\right) :=A_1^{k_1}A_2^{k_2}.
\]

A direct computation shows the following, provided that $a_{1,1}\neq 0$, $a_{2,1}\neq 0$.
\begin{lemma} \label{coef2}
\begin{eqnarray*}
a_1(k) &=& a_{1,1}^{k_1} a_{2,1}^{k_2}\\
a_2(k) &=& a_1(k) \sum_{j=1}^{2} k_j\frac{a_{j,2}}{a_{j,1}}
\end{eqnarray*}
\end{lemma}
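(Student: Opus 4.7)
The plan is to compute $A_1^{k_1}A_2^{k_2}$ directly using the nilpotent shift decomposition developed in Section 3. Since each $A_i$ is $2\times 2$ upper triangular Toeplitz, we can write
\[
A_i = a_{i,1}I + a_{i,2}U_1,
\]
where $U_1$ is the $2\times 2$ nilpotent shift with $U_1^2 = 0$.

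Because $U_1^2$ vanishes, the binomial expansion of $A_i^{k_i}$ collapses to just the first two terms:
\[
A_i^{k_i} = a_{i,1}^{k_i}I + k_i\, a_{i,1}^{k_i-1}a_{i,2}\, U_1.
\]
This may also be read off directly from the multinomial formula for $A_i^k$ given in Section 3 by noting that for $n=2$ the exponent $k_{i,2}$ controlling $U_1$ must equal $0$ or $1$, since any higher power annihilates. Next I would multiply:
\[
A_1^{k_1}A_2^{k_2} = \bigl(a_{1,1}^{k_1}I + k_1 a_{1,1}^{k_1-1}a_{1,2}\,U_1\bigr)\bigl(a_{2,1}^{k_2}I + k_2 a_{2,1}^{k_2-1}a_{2,2}\,U_1\bigr),
\]
and again drop the $U_1^2$ term. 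The diagonal entry is evidently $a_{1,1}^{k_1}a_{2,1}^{k_2}$, which gives the claimed formula for $a_1(k)$.

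For the off-diagonal entry, collecting the two surviving cross terms yields
\[
a_2(k) = a_{1,1}^{k_1}k_2 a_{2,1}^{k_2-1}a_{2,2} + k_1 a_{1,1}^{k_1-1}a_{1,2}\,a_{2,1}^{k_2},
\]
which, under the standing assumption $a_{1,1},a_{2,1}\neq 0$, can be factored as
\[
a_2(k) = a_{1,1}^{k_1}a_{2,1}^{k_2}\left(k_1\frac{a_{1,2}}{a_{1,1}} + k_2\frac{a_{2,2}}{a_{2,1}}\right) = a_1(k)\sum_{j=1}^{2} k_j\frac{a_{j,2}}{a_{j,1}},
\]
completing the proof.

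There is essentially no obstacle here: the only ingredient is the nilpotency $U_1^2=0$, which truncates both the binomial expansions and the product to finitely many computable terms. The hypothesis $a_{i,1}\neq 0$ is used only for the final cosmetic factoring; without it one still has the same closed form written in unfactored shape.
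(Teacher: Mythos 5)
Your proof is correct and follows exactly the route the paper intends: the paper offers no written proof beyond the remark that ``a direct computation shows the following, provided that $a_{1,1}\neq 0$, $a_{2,1}\neq 0$,'' and that computation is precisely your expansion $A_i=a_{i,1}I+a_{i,2}U_1$ with $U_1^2=0$ from Section~3. Your closing observation that the hypothesis $a_{i,1}\neq 0$ is needed only for the factored form correctly identifies the role of the paper's standing assumption.
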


\begin{lemma} \label{equivhyp2}
The tuple $(A_1,A_2)$ is hypercyclic if and only if the set
\[
\left\{\left(
\begin{array}{c}
\sum_{j=1}^{2} k_j\frac{a_{j,2}}{a_{j,1}}\\
a_{1,1}^{k_1} a_{2,1}^{k_2}
\end{array}
\right) : k_1, k_2\in \mathbb{N}\right\}
\]
is dense in $\mathbb{C}^2$.
\end{lemma}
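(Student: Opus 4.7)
The plan is to use Lemma~\ref{coef2} to express the orbit of a generic vector $x=(x_1,x_2)^T$ as the image of the candidate set $S$ under a simple polynomial map, and then to observe that this map is a homeomorphism on an open dense subset of $\mathbb{C}^2$ that contains both $S$ and the orbit.

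By Lemma~\ref{coef2}, for $x=(x_1,x_2)^T\in\mathbb{C}^2$ one has
\[
A(k)x=\begin{pmatrix} a_{1,1}^{k_1}a_{2,1}^{k_2}\Bigl(x_1+x_2\sum_{j=1}^{2} k_j\tfrac{a_{j,2}}{a_{j,1}}\Bigr)\\[4pt] a_{1,1}^{k_1}a_{2,1}^{k_2}\,x_2\end{pmatrix}.
\]
If $x_2=0$ the orbit lies in $\mathbb{C}\times\{0\}$, so no hypercyclic vector has $x_2=0$. Fix $x$ with $x_2\neq 0$ and set $\Phi_x(s,t)=\bigl(t(x_1+x_2 s),\,tx_2\bigr)$ and
\[
S=\Bigl\{\bigl(\textstyle\sum_{j=1}^{2} k_j\tfrac{a_{j,2}}{a_{j,1}},\,a_{1,1}^{k_1}a_{2,1}^{k_2}\bigr):k_1,k_2\in\mathbb{N}\Bigr\};
\]
the computation above shows that $\mathrm{Orb}((A_1,A_2),x)=\Phi_x(S)$.

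Two elementary observations now finish the argument. First, $\Phi_x$ restricts to a homeomorphism of $\mathbb{C}\times\mathbb{C}^*$ onto itself, with inverse $(y_1,y_2)\mapsto\bigl(y_1/y_2-x_1/x_2,\,y_2/x_2\bigr)$. Second, since $a_{1,1}a_{2,1}\neq 0$ (implicit already in Lemma~\ref{coef2}) both $S$ and $\mathrm{Orb}((A_1,A_2),x)$ are contained in $\mathbb{C}\times\mathbb{C}^*$, and this set is open and dense in $\mathbb{C}^2$, so density in $\mathbb{C}\times\mathbb{C}^*$ and density in $\mathbb{C}^2$ coincide for either of them. Combining these yields the equivalence: $S$ is dense in $\mathbb{C}^2$ iff $\Phi_x(S)=\mathrm{Orb}((A_1,A_2),x)$ is dense in $\mathbb{C}^2$. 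The forward implication of the lemma is obtained by choosing any hypercyclic $x$ (automatically having $x_2\neq 0$); the converse works with $x=(0,1)^T$, for instance. The only potential ``obstacle'' is the bookkeeping around the stratum $\mathbb{C}\times\mathbb{C}^*$, needed because $\Phi_x$ collapses the line $t=0$ to the origin and is therefore not a homeomorphism on all of $\mathbb{C}^2$.
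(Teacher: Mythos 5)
Your proof is correct and takes essentially the same route as the paper: both directions rest on Lemma~\ref{coef2} together with the observation that the orbit of $x$ and the set $S$ are related by an invertible change of coordinates, so density of one is equivalent to density of the other. Your packaging of that change of coordinates as a homeomorphism of the open dense stratum $\mathbb{C}\times\mathbb{C}^{*}$ is a tidy way to organize the explicit sequence-chasing the paper does by hand (and it quietly disposes of the targets with vanishing second coordinate, a point the paper's argument passes over).
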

\begin{proof}
Assume that $(A_1,A_2)$ is hypercyclic with hypercyclic vector $(z_1,z_2)\in \mathbb{C}^2$. Then, we necessarily have $a_{1,1}\neq 0$ and $a_{2,1}\neq 0$ and the set
$$ \{ (a_1(k)z_1+a_2(k)z_2, a_1(k)z_2) : k=(k_1,k_2)\in \mathbb{N}\times \mathbb{N} \} $$ is dense in $\mathbb{C}^2$. From the last fact, it follows that $z_2\neq 0$. Take $(w_1,w_2)\in \mathbb{C}^2$. From our hypothesis and Lemma \ref{coef2} there exist sequences $\{ k_1(n)\}$, $\{ k_2(n) \}$ of positive integers such that
$$ a_1(k(n))z_2\to w_2z_2$$
$$a_1(k(n))z_1+a_2(k(n))z_2\to w_2z_1+w_1w_2z_2 ,$$
where $k(n)=(k_1(n),k_2(n))$. Since $z_2\neq 0$, from the above we get
$$ a_1(k(n))\to w_2 \quad \textrm{and} \quad  \sum_{j=1}^{2} k_j(n)\frac{a_{j,2}}{a_{j,1}} \to w_1.$$ For the converse implication one works in a similar manner.
\end{proof}

Once we have at our disposal the above lemma, one can follow the argument from \cite{Feldman} in order to conclude that the tuple $(A_1,A_2)$ is not hypercyclic.
\begin{proposition}
The tuple $(A_1, A_2)$ is not hypercyclic.
\end{proposition}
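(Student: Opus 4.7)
The plan is to combine Lemma \ref{equivhyp2} with an elementary real-dimension count in the spirit of Feldman's argument for diagonal tuples. First I would handle the degenerate case $a_{1,1}=0$ or $a_{2,1}=0$ separately: then one of $A_1,A_2$ is nilpotent of square zero, so the orbit of any vector lies in a one-parameter family together with the origin and cannot be dense in $\mathbb{C}^2$. So from now on assume $\alpha:=a_{1,1}$ and $\beta:=a_{2,1}$ are nonzero, set $c_j:=a_{j,2}/a_{j,1}$, and suppose for contradiction that $(A_1,A_2)$ is hypercyclic. By Lemma \ref{equivhyp2} the set
\[
S=\left\{\bigl(k_1c_1+k_2c_2,\ \alpha^{k_1}\beta^{k_2}\bigr):k_1,k_2\in\mathbb{N}\right\}
\]
must be dense in $\mathbb{C}^2$.

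The key idea is to pass to real coordinates in a way that linearizes everything in $(k_1,k_2)$: take the real part and imaginary part of the first slot, together with the logarithm of the modulus of the second slot. Formally, introduce the continuous open (submersive) surjection
\[
\Phi:\mathbb{C}\times(\mathbb{C}\setminus\{0\})\to\mathbb{R}^3,\qquad \Phi(w_1,w_2)=\bigl(\Re w_1,\,\Im w_1,\,\log|w_2|\bigr).
\]
Since $S\subset\mathbb{C}\times(\mathbb{C}\setminus\{0\})$ and the latter is dense open in $\mathbb{C}^2$, density of $S$ in $\mathbb{C}^2$ forces $\Phi(S)$ to be dense in $\mathbb{R}^3$.

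On the other hand, a direct computation gives
\[
\Phi(S)=\bigl\{k_1v_1+k_2v_2:k_1,k_2\in\mathbb{N}\bigr\},\qquad v_j:=\bigl(\Re c_j,\,\Im c_j,\,\log|a_{j,1}|\bigr)\in\mathbb{R}^3,
\]
so $\Phi(S)$ is contained in the $\mathbb{R}$-linear span of the two vectors $v_1,v_2$, a closed subspace of $\mathbb{R}^3$ of real dimension at most $2$. Such a proper closed subspace cannot contain a dense subset of $\mathbb{R}^3$, yielding the desired contradiction.

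I do not anticipate a serious obstacle: once Lemma \ref{equivhyp2} is invoked, the entire argument collapses into the linear-algebraic dimension count that two $\mathbb{Z}$-linear combinations of vectors in $\mathbb{R}^3$ fill out at most a two-dimensional subspace. The only mildly delicate point is the transfer of density under $\Phi$, which follows immediately from $\Phi$ being an open map onto $\mathbb{R}^3$.
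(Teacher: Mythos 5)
Your proof is correct, but it takes a different route from the paper's. The paper's proof also starts from Lemma \ref{equivhyp2}, but then exponentiates the first coordinate: from density of $\bigl\{\bigl(\sum_j k_j a_{j,2}/a_{j,1},\, a_{1,1}^{k_1}a_{2,1}^{k_2}\bigr)\bigr\}$ it deduces density of $\bigl\{\bigl((e^{a_{1,2}/a_{1,1}})^{k_1}(e^{a_{2,2}/a_{2,1}})^{k_2},\, a_{1,1}^{k_1}a_{2,1}^{k_2}\bigr)\bigr\}$, which would make a pair of diagonal $2\times 2$ matrices hypercyclic, contradicting Feldman's theorem that no $n$-tuple of diagonal matrices is hypercyclic on $\mathbb{C}^n$. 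You instead make the underlying dimension count explicit: passing through $\Phi(w_1,w_2)=(\Re w_1,\Im w_1,\log|w_2|)$ you trap the image of the orbit data inside the span of two vectors in $\mathbb{R}^3$, a proper closed subspace, hence nowhere dense. Your argument is self-contained (it does not invoke Feldman's result as a black box), at the cost of redoing in this special case essentially the same real-dimension count that underlies Feldman's proof; the paper's version is shorter because it outsources that count. Two small remarks: the transfer of density under $\Phi$ needs only continuity and surjectivity of $\Phi$ on the open set $\mathbb{C}\times(\mathbb{C}\setminus\{0\})$ containing $S$, so your appeal to openness is harmless but unnecessary; and your treatment of the degenerate case $a_{1,1}=0$ or $a_{2,1}=0$ is stated a bit loosely (the orbit is contained in the $A_2$-orbit of $x$, its image under $A_1$, which lies in a complex line, and the origin --- one should say why the single-operator orbit is not dense), though in fact the paper's proof of Lemma \ref{equivhyp2} already records that hypercyclicity forces $a_{1,1}\neq 0$ and $a_{2,1}\neq 0$, so this case is subsumed there.
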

\begin{proof}
Suppose on the contrary that $(A_1,A_2)$ is hypercyclic. By Lemma \ref{equivhyp2} it follows that the set
\[
\left\{\left(
\begin{array}{c}
\sum_{j=1}^{2} k_j\frac{a_{j,2}}{a_{j,1}}\\
a_{1,1}^{k_1} a_{2,1}^{k_2}
\end{array}
\right) :k_1, k_2\in \mathbb{N}\right\}
\]
is dense in $\mathbb{C}^2$ and hence the set
\[
\left\{\left(
\begin{array}{c}
\left( e^{\frac{a_{1,2}}{a_{1,1}}} \right)^{k_1} \left( e^{\frac{a_{2,2}}{a_{2,1}}} \right)^{k_2}\\
a_{1,1}^{k_1} a_{2,1}^{k_2}
\end{array}
\right) :k_1, k_2\in \mathbb{N}\right\}
\]
is dense in $\mathbb{C}^2$. The last implies that the tuple of diagonal matrices
\[ \left(
\left(
\begin{array} {ll}
e^{\frac{a_{1,2}}{a_{1,1}}} & 0\\
0 & a_{1,1}
\end{array}
\right) , \left(
\begin{array} {ll}
e^{\frac{a_{2,2}}{a_{2,1}}} & 0\\
0 & a_{2,1}
\end{array}
\right) \right)
\]
is hypercyclic which is a contradiction from the results in \cite{Feldman}.
\end{proof}
From the discussion in section 2 and the proposition above we conclude that the tuple $(B_1,B_2)$ is not hypercyclic.

\section{3$\times$3 matrices}
In order to simplify things, assume that $B_1$ is a $3\times 3$ cyclic matrix having only one eigenvalue $\lambda$, i.e $B_1$ is similar to a $3\times 3$ matrix in
Jordan form, say $A_1=J_3(\lambda)$ through a similarity matrix $P$. Let us consider the more general situation where $A_1$ is a (triangular) Toeplitz matrix. To
illustrate how making use of the computation in section 3 can turn a purely algebraic problem (that of determining the elements of $A^k$) into a more or less
combinatorial one, consider raising to the power $k$ the $3\times 3$ matrix
\[
A_1=\left(
\begin{array} {lll}
a_{1,1} & a_{1,2} & a_{1,3}\\
0 & a_{1,1} & a_{1,2}\\
0 & 0 & a_{1,1}
\end{array}
\right).
\]
From the multinomial theorem
\[
A_1^k = \sum_{\substack{k_{1,1},k_{1,2},k_{1,3} \\k_{1,1}+k_{1,2}+ k_{1,3}=k}} \binom{k}{k_{1,1},k_{1,2}, k_{1,3}} a_{1,1}^{k_{1,1}} a_{1,2}^{k_{1,2}}
a_{1,3}^{k_{1,3}}\; U_1^{k_{1,2}+2 k_{1,3}}
\]
so the product matrix is of the form
\[
C=\left(
\begin{array} {lll}
c_{1,1} & c_{1,2} & c_{1,3}\\
0 & c_{1,1} & c_{1,2}\\
0 & 0 & c_{1,1}
\end{array}
\right).
\]
To determine $c_{1,1}$ we require $k_{1,2}+2 k_{1,3}=0$ which implies that $k_{1,2}=k_{1,3}=0$ and so $k_{1,1}=k$. As a result,
\[
c_{1,1} = \binom{k}{k,0,0} a_{1,1}^k a_{1,2}^0 a_{1,3}^0 =a_{1,1}^k.
\]
To determine $c_{1,2}$ we require $k_{1,2}+2 k_{1,3}=1$ which implies that $k_{1,3}=0$ and $k_{1,2}=1$. As a result,
\[
c_{1,2} = \binom{k}{k-1,1,0} a_{1,1}^{k-1} a_{1,2}^1 a_{1,3}^0 =k a_{1,1}^{k-1} a_{1,2}.
\]
To determine $c_{1,3}$ we require $k_{1,2}+2 k_{1,3}=2$ which implies that either $k_{1,3}=1$ and $k_{1,2}=0$ or $k_{1,3}=0$ and $k_{1,2}=2$. The first choice gives
\[
\binom{k}{k-1,0,1} a_{1,1}^{k-1} a_{1,2}^0 a_{1,3}^1 =k a_{1,1}^{k-1} a_{1,3}
\]
and the second choice gives
\[
\binom{k}{k-2,2,0} a_{1,1}^{k-2} a_{1,2}^2 a_{1,3}^0 = \frac{k(k-1)}{2} a_{1,1}^{k-2} a_{1,2}^2.
\]
Adding the two together gives
\[
c_{1,3}=k a_{1,1}^{k-1} a_{1,3}+ \frac{k(k-1)}{2} a_{1,1}^{k-2} a_{1,2}^2.
\]

Take now $B_2$, $B_3$, both $3\times 3$ matrices such that the tuple $(B_1,B_2,B_3)$ is commuting. From the discussion in section 2 and since $B_1$ has only one
eigenvalue, it follows that $A_j=P^{-1}B_jP$ is triangular and Toeplitz for every $j=1,2,3$. The elements of the matrices $A_1$, $A_2$, $A_3$ are defined as in
section 2. Then for a multi-index $k=(k_1,k_2,k_3)$, $k_1,k_2,k_3\in \mathbb{N}$ define
\[
B(k)=\left(
\begin{array} {lll}
b_1(k) & b_2(k) & b_3(k)\\
0 & b_1(k) & b_2(k)\\
0 & 0& b_1(k)
\end{array}
\right) =A_1^{k_1}A_2^{k_2}A_3^{k_3} .
\]
We are now ready to follow the scheme developed in section 4. The proofs of the following lemmata and proposition are similar to that in section 4 and are left to the
interested reader.
\begin{lemma} \label{coef3}
Denoting by $k=(k_1,k_2,k_3)$ a multi-index, $k_1,k_2,k_3\in \mathbb{N}$, we have
\begin{eqnarray*}
b_1(k) &=& a_{1,1}^{k_1} a_{2,1}^{k_2} a_{3,1}^{k_3}\\
b_2(k) &=& b_1(k) \sum_{j=1}^{3} k_j\frac{a_{j,2}}{a_{j,1}}\\
b_3(k) &=& b_1(k) \bigg{(} \sum_{j=1}^{3} k_j (\frac{a_{j,3}}{a_{j,1}} - \frac{1}{2} \frac{a_{j,2}^2}{a_{j,1}^2}) + \frac{1}{2}(\sum_{j=1}^{3}
k_j\frac{a_{j,2}}{a_{j,1}})^2
            \bigg{)}\\
    &=& b_1(k) \bigg{(} \sum_{j=1}^{3} k_j (\frac{a_{j,3}}{a_{j,1}} - \frac{1}{2} \frac{a_{j,2}^2}{a_{j,1}^2}) + \frac{1}{2}\frac{b_2(k)^2}{b_1(k)^2} \bigg{)}
\end{eqnarray*}
\end{lemma}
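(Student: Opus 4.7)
The plan is to feed each factor $A_j^{k_j}$ through the multinomial expansion already set up in Section~3 and then read off the coefficients of $U_0,U_1,U_2$ in the product. Since each $A_j$ is a polynomial in the shift $U_1$, the three factors commute, and $U_1^{p}=0$ for $p\ge 3$, the product $A_1^{k_1}A_2^{k_2}A_3^{k_3}$ is again a polynomial in $U_1$ of degree at most~$2$; this already confirms that $B(k)$ is upper triangular Toeplitz, and its coefficient of $U_1^{p}$ is exactly $b_{p+1}(k)$. Expanding, $b_{p+1}(k)$ is the sum over all configurations $(k_{j,i})$ with $k_{j,1}+k_{j,2}+k_{j,3}=k_j$ for $j=1,2,3$ and $\sum_{j=1}^{3}(k_{j,2}+2k_{j,3})=p$ of
\[
\prod_{j=1}^{3}\binom{k_j}{k_{j,1},k_{j,2},k_{j,3}}\,a_{j,1}^{k_{j,1}}a_{j,2}^{k_{j,2}}a_{j,3}^{k_{j,3}}.
\]
The task thus reduces to a small combinatorial enumeration for each $p\in\{0,1,2\}$.

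The easy cases are $p=0$ and $p=1$. For $p=0$ only the trivial configuration $k_{j,1}=k_j$, $k_{j,2}=k_{j,3}=0$ survives, giving $b_1(k)=a_{1,1}^{k_1}a_{2,1}^{k_2}a_{3,1}^{k_3}$. For $p=1$ every $k_{j,3}$ vanishes and exactly one index $j_0$ has $k_{j_0,2}=1$; summing the three resulting terms and factoring out $b_1(k)$ reproduces the formula for $b_2(k)$ in direct parallel with the computation performed for the $2\times 2$ case in Section~4.

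The substantive case is $b_3(k)$, where the constraint $\sum_j(k_{j,2}+2k_{j,3})=2$ admits three types of configurations, which I would enumerate in turn: (i) one index $j_0$ with $k_{j_0,3}=1$ and every other free variable zero, contributing $k_{j_0}\,(a_{j_0,3}/a_{j_0,1})\,b_1(k)$; (ii) one index $j_0$ with $k_{j_0,2}=2$, contributing $\binom{k_{j_0}}{2}(a_{j_0,2}^2/a_{j_0,1}^2)\,b_1(k)$; and (iii) two distinct indices $j_1<j_2$ with $k_{j_1,2}=k_{j_2,2}=1$, contributing $k_{j_1}k_{j_2}\,(a_{j_1,2}a_{j_2,2})/(a_{j_1,1}a_{j_2,1})\,b_1(k)$. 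The coefficients arise from evaluating the relevant trinomial coefficients $\binom{k_j}{k_j-1,1,0}$, $\binom{k_j}{k_j-1,0,1}$, $\binom{k_j}{k_j-2,2,0}$ and collecting the remaining $a_{i,1}^{k_i}$ factors into $b_1(k)$.

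The main bookkeeping hurdle is then to repackage (ii) and (iii) into a perfect square so that the formula in the statement appears. Using $\binom{k_j}{2}=\frac{1}{2}(k_j^2-k_j)$ and the identity
\[
\frac{1}{2}\Bigl(\sum_{j=1}^{3} k_j \frac{a_{j,2}}{a_{j,1}}\Bigr)^{2}=\frac{1}{2}\sum_{j=1}^{3} k_j^{2}\frac{a_{j,2}^{2}}{a_{j,1}^{2}}+\sum_{j_1<j_2}k_{j_1}k_{j_2}\frac{a_{j_1,2}a_{j_2,2}}{a_{j_1,1}a_{j_2,1}},
\]
the sum of (ii) and (iii) collapses to $\frac{1}{2}\bigl(\sum_{j} k_j a_{j,2}/a_{j,1}\bigr)^{2}-\frac{1}{2}\sum_{j} k_j a_{j,2}^{2}/a_{j,1}^{2}$. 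Combining this with (i) and factoring out $b_1(k)$ produces the first displayed expression for $b_3(k)$; substituting the already-established identity $b_2(k)/b_1(k)=\sum_{j} k_j a_{j,2}/a_{j,1}$ then gives the second displayed form, completing the plan.
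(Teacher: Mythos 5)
Your proposal is correct and follows exactly the route the paper intends: the multinomial expansion in powers of the shift $U_1$ from Section~3, the enumeration of configurations with $\sum_j(k_{j,2}+2k_{j,3})=p$ for $p=0,1,2$, and the regrouping of the quadratic terms via $\binom{k_j}{2}=\tfrac12(k_j^2-k_j)$ into the square $\tfrac12\bigl(\sum_j k_j a_{j,2}/a_{j,1}\bigr)^2$ — this is precisely the computation the paper carries out explicitly for $c_3(k)$ in the $4\times4$ case and leaves to the reader here. The only implicit hypothesis, as in Lemma~\ref{coef2}, is that each $a_{j,1}\neq 0$ so that the divisions are legitimate.
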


\begin{lemma} \label{equivhyp3}
The tuple $(A_1,A_2,A_3)$ is hypercyclic if and only if the set
\[
\left\{\left(
\begin{array}{c}
\sum_{j=1}^{3} k_j (\frac{a_{j,3}}{a_{j,1}} - \frac{1}{2} \frac{a_{j,2}^2}{a_{j,1}^2})  \\
\sum_{j=1}^{3} k_j\frac{a_{j,2}}{a_{j,1}}\\
a_{1,1}^{k_1} a_{2,1}^{k_2} a_{3,1}^{k_3}
\end{array}
\right) : k_1, k_2, k_3\in \mathbb{N}\right\}
\]
is dense in $\mathbb{C}^3$.
\end{lemma}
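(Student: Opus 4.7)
My plan is to mimic the template of Lemma~\ref{equivhyp2}, exploiting the identities of Lemma~\ref{coef3}. Introduce the shorthand
\[
\beta(k) := \sum_{j=1}^{3} k_j \frac{a_{j,2}}{a_{j,1}}, \qquad \alpha(k) := \sum_{j=1}^{3} k_j \left( \frac{a_{j,3}}{a_{j,1}} - \frac{1}{2}\frac{a_{j,2}^2}{a_{j,1}^2} \right),
\]
so that $b_2(k) = b_1(k)\beta(k)$ and $b_3(k) = b_1(k)\bigl(\alpha(k) + \tfrac{1}{2}\beta(k)^2\bigr)$. Hence on the set $\{b_1(k)\neq 0\}$ the triple $(\alpha(k), \beta(k), b_1(k))$ determines $B(k)$ by polynomial formulas and, conversely, is recovered from $B(k)$ via $\beta = b_2/b_1$ and $\alpha = b_3/b_1 - \tfrac{1}{2}(b_2/b_1)^2$. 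This polynomial bijection is the pivot of the whole argument.

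For the forward direction, assume $(A_1, A_2, A_3)$ is hypercyclic with hypercyclic vector $z = (z_1, z_2, z_3)$. Examining the third coordinate of $B(k)z$, which is $b_1(k) z_3$, forces $z_3 \neq 0$ and $a_{j,1} \neq 0$ for $j=1,2,3$ (otherwise the corresponding $A_j$ is nilpotent and $b_1(k) z_3$ cannot be dense in $\mathbb{C}$). Given any target $(u,v,w)\in \mathbb{C}^3$ with $w\neq 0$, set
\[
W = \bigl( w z_1 + wv z_2 + w(u + \tfrac{1}{2}v^2) z_3,\; w z_2 + wv z_3,\; w z_3 \bigr),
\]
and extract from the dense orbit a sequence $k(n)$ with $B(k(n)) z \to W$. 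Reading the coordinates bottom-up and dividing by $z_3$ gives successively $b_1(k(n)) \to w$, $b_2(k(n)) \to wv$, and $b_3(k(n)) \to w(u+\tfrac{1}{2}v^2)$, from which the inverse formulas yield $\beta(k(n)) \to v$ and $\alpha(k(n)) \to u$. Since $\{w \neq 0\}$ is dense in $\mathbb{C}^3$, density of the set in the statement follows.

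For the converse, I take the candidate hypercyclic vector $z = (0, 0, 1)$, so that $B(k) z = (b_3(k), b_2(k), b_1(k))$. Given a target $(t_1,t_2,t_3)\in \mathbb{C}^3$ with $t_3 \neq 0$, define $w = t_3$, $v = t_2/t_3$, $u = t_1/t_3 - \tfrac{1}{2}v^2$, and use density of $\{(\alpha(k),\beta(k),b_1(k))\}$ to approximate $(u,v,w)$ arbitrarily closely; the continuity of the polynomial relations then propagates this to $(b_3(k), b_2(k), b_1(k)) \approx (t_1,t_2,t_3)$. The case $t_3 = 0$ is reached by first perturbing $t_3$ to a small nonzero value. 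The only real subtlety is the error bookkeeping in the converse when $t_3$ is small, since the auxiliary $u, v$ may be large and the density approximation must be taken correspondingly tight; this however is routine and requires no new idea beyond the $n = 2$ argument.
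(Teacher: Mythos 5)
Your argument is correct and is precisely the scheme the paper intends: it replicates the proof of Lemma~\ref{equivhyp2} (read the coordinates of $B(k)z$ from the bottom up to extract $b_1(k)$, $b_2(k)$, $b_3(k)$, then pass to $(\alpha,\beta,b_1)$ via the polynomial identities of Lemma~\ref{coef3}), which is exactly what the authors mean when they leave this proof to the reader as ``similar to that in section 4.'' One small simplification: in the converse the ``error bookkeeping'' for small $t_3$ is unnecessary, since showing that the orbit closure contains the dense set $\{t_3\neq 0\}$ already gives density.
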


\begin{proposition}
The tuple $(A_1,A_2,A_3)$ is not hypercyclic.
\end{proposition}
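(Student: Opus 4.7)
The plan is to mimic, almost verbatim, the argument given for the $2\times 2$ case, with the extra combinatorial layer in $b_3(k)$ already absorbed into Lemma \ref{equivhyp3}. So the only genuine work is a reduction from the set in Lemma \ref{equivhyp3} to the orbit of a diagonal triple, after which Feldman's theorem supplies the contradiction.

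First I would argue by contradiction: assume $(A_1,A_2,A_3)$ is hypercyclic. By Lemma \ref{equivhyp3}, the set
\[
S=\left\{\left(
\begin{array}{c}
\sum_{j=1}^{3} k_j\left(\tfrac{a_{j,3}}{a_{j,1}} - \tfrac{1}{2}\tfrac{a_{j,2}^2}{a_{j,1}^2}\right) \\[2pt]
\sum_{j=1}^{3} k_j\tfrac{a_{j,2}}{a_{j,1}}\\[2pt]
a_{1,1}^{k_1} a_{2,1}^{k_2} a_{3,1}^{k_3}
\end{array}\right) : k_1,k_2,k_3\in\mathbb{N}\right\}
\]
is dense in $\mathbb{C}^3$. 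In particular all $a_{j,1}$ are nonzero.

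Next I would apply the coordinatewise map $(z_1,z_2,z_3)\mapsto(e^{z_1},e^{z_2},z_3)$ to $S$, which converts the linear sums in the first two coordinates into products of $k_j$-th powers. The image is
\[
\left\{\left(
\begin{array}{c}
\prod_{j=1}^{3}\Bigl(e^{\,a_{j,3}/a_{j,1}-a_{j,2}^2/(2a_{j,1}^2)}\Bigr)^{k_j}\\[2pt]
\prod_{j=1}^{3}\Bigl(e^{\,a_{j,2}/a_{j,1}}\Bigr)^{k_j}\\[2pt]
a_{1,1}^{k_1} a_{2,1}^{k_2} a_{3,1}^{k_3}
\end{array}\right) : k_1,k_2,k_3\in\mathbb{N}\right\}.
\]
Since $\exp:\mathbb{C}\to\mathbb{C}\setminus\{0\}$ is continuous and surjective, the map $(z_1,z_2,z_3)\mapsto(e^{z_1},e^{z_2},z_3)$ sends dense subsets of $\mathbb{C}^3$ to dense subsets of $(\mathbb{C}\setminus\{0\})^2\times\mathbb{C}$, hence to dense subsets of $\mathbb{C}^3$. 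So this new set is dense in $\mathbb{C}^3$.

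But this new set is precisely the orbit of the vector $(1,1,1)^T$ under the commuting $3$-tuple of diagonal matrices
\[
D_j=\begin{pmatrix}
e^{\,a_{j,3}/a_{j,1}-a_{j,2}^2/(2a_{j,1}^2)} & 0 & 0\\
0 & e^{\,a_{j,2}/a_{j,1}} & 0\\
0 & 0 & a_{j,1}
\end{pmatrix},\qquad j=1,2,3.
\]
Thus $(D_1,D_2,D_3)$ would be a hypercyclic $3$-tuple of diagonal $3\times 3$ matrices on $\mathbb{C}^3$, contradicting Feldman's result that no $n$-tuple of $n\times n$ diagonal matrices is hypercyclic on $\mathbb{C}^n$.

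The only step requiring any care is the density transfer under coordinatewise exponentiation, but this is a straightforward continuity-plus-surjectivity argument (given $(w_1,w_2,w_3)\in(\mathbb{C}\setminus\{0\})^2\times\mathbb{C}$, pick preimages of $w_1,w_2$ under $\exp$ and approximate in $S$). Everything else is the same cosmetic rewriting that worked in Section 4, so no real obstacle is expected; the substantive content lies entirely in the formula for $b_3(k)$ in Lemma \ref{coef3}, where the quadratic cross-term $\tfrac12 b_2(k)^2/b_1(k)^2$ is what makes the $b_3$-equation reduce to the single linear expression $\sum_j k_j(a_{j,3}/a_{j,1}-a_{j,2}^2/(2a_{j,1}^2))$ after one divides through and subtracts.
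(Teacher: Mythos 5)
Your proposal is correct and follows exactly the route the paper intends: the authors explicitly leave the $3\times 3$ proof to the reader as "similar to that in section 4," and your argument is the verbatim analogue — exponentiate the two linear coordinates of the set in Lemma \ref{equivhyp3}, identify the resulting set as the orbit of $(1,1,1)^T$ under a triple of diagonal matrices, and invoke Feldman's theorem that no $n$-tuple of diagonal $n\times n$ matrices is hypercyclic on $\mathbb{C}^n$. The density-transfer step via picking $\exp$-preimages is sound and is the same (implicit) step used in the $2\times 2$ proof.
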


\section{4$\times$4 matrices}
Again, as in the previous section, we only deal with $4\times 4$ triangular Toeplitz matrices. So, let $A_1,A_2,A_3,A_4$ be $4\times 4$ triangular Toeplitz matrices
with elements defined as in section 2. For a multi-index $k=(k_1,k_2,k_3,k_4)$, $k_1,k_2,k_3,k_4\in \mathbb{N}$ we compute the entries of the $4\times 4$ matrix
\[
C(k)=\left(
\begin{array} {llll}
c_1(k) & c_2(k) & c_3(k) & c_4(k)\\
0 & c_1(k) & c_2(k) & c_3(k)\\
0 & 0 & c_1(k) & c_2(k)\\
0 & 0 & 0 & c_1(k)
\end{array}
\right) :=A_1^{k_1} A_2^{k_2} A_3^{k_3} A_4^{k_4} ,
\]
since it will be of use to us in the sequel. To determine $c_1(k)$ we require
\[
k_{1,2}+\ldots + k_{4,2}+2(k_{1,3}+\ldots+k_{4,3})+3(k_{1,4}+\ldots+k_{4,4})=0
\]
which implies that all of the above $k$'s are $0$ which implies that $k_{1,1}=k_1$, $k_{2,1}=k_2$, $k_{3,1}=k_3$, $k_{4,1}=k_4$. As a result
\[
c_1(k)=a_{1,1}^{k_1} a_{2,1}^{k_2} a_{3,1}^{k_3} a_{4,1}^{k_4}.
\]
To determine $c_2(k)$ we require
\[
k_{1,2}+\ldots + k_{4,2}+2(k_{1,3}+\ldots+k_{4,3})+3(k_{1,4}+\ldots+k_{4,4})=1
\]
which is the case if one of $k_{1,2}$, $k_{2,2}$, $k_{3,2}$, $k_{4,2}$ is $1$ and all the rest are $0$. Doing the calculation gives
\[
c_2(k)=c_1(k) \sum_{j=1}^{4} k_j\frac{a_{j,2}}{a_{j,1}}.
\]
To determine $c_3(k)$ we require
\[
k_{1,2}+\ldots + k_{4,2}+2(k_{1,3}+\ldots+k_{4,3})+3(k_{1,4}+\ldots+k_{4,4})=2
\]
which is the case if either one of $k_{1,3}$, $k_{2,3}$, $k_{3,3}$, $k_{4,3}$ is 1 or any one of $k_{1,2}$, $k_{2,2}$, $k_{3,2}$, $k_{4,2}$ is $2$ or any two of
$k_{1,2}$, $k_{2,2}$, $k_{3,2}$, $k_{4,2}$ are $1$ and the rest are $0$. Doing the calculation gives
\[
c_3(k) = c_1(k) \left( \sum_{j=1}^{4} k_j \frac{a_{j,3}}{a_{j,1}} + \sum_{l,j=1,\,\, l>j}^{4} k_lk_j \frac{a_{l,2}a_{j,2}}{a_{l,1}a_{j,1}} + \sum_{j=1}^{4}
\frac{k_j(k_j-1)}{2} \frac{a_{j,2}^2}{a_{j,1}^2} \right).
\]
To determine $c_4(k)$ we require
\[
k_{1,2}+\ldots + k_{4,2}+2(k_{1,3}+\ldots+k_{4,3})+3(k_{1,4}+\ldots+k_{4,4})=3
\]
which is the case if either one of $k_{1,4}$, $k_{2,4}$, $k_{3,4}$, $k_{4,4}$ is $1$ or one of $k_{1,3}, k_{2,3}, k_{3,3}, k_{4,3}$ is $1$ and one of $k_{1,2},
k_{2,2}, k_{3,2}, k_{4,2}$ is $1$ or from the set $k_{1,2}, k_{2,2}, k_{3,2}, k_{4,2}$ either one is $3$ or one is $2$ and another is $1$ or any three are $1$. These
cover all the configurations which satisfy the above expression. Doing the calculations gives
\begin{eqnarray*}
c_4(k) &=& c_1(k) \bigg( \sum_{j=1}^{4} k_j \frac{a_{j,4}}{a_{j,1}} + \sum_{j=1}^{4} k_j(k_j-1) \frac{a_{j,2}a_{j,3}}{a_{j,1}^2}\\
       &+& \sum_{j,l=1,\,\, l\neq j}^{4} k_lk_j\frac{a_{j,2}}{a_{j,1}} \frac{a_{l,3}}{a_{l,1}}
    + \sum_{\substack{j,l,p=1 \\ j\neq l,\, j\neq p,\, l\neq p}}^{4} k_jk_lk_p \frac{a_{j,2}a_{l,2}a_{p,2}}{a_{j,1}a_{l,1}a_{p,1}}\\
       &+& \sum_{j=1}^{4} \frac{k_j(k_j-1)(k_j-2)}{6}
         \frac{a_{j,2}^3}{a_{j,1}^3}
    + \sum_{j,l=1,\,\, j\neq l}^{4} \frac{k_j(k_j-1)k_l}{2} \frac{a_{j,2}^2a_{l,2}}{a_{j,1}^2a_{l,1}} \bigg).
\end{eqnarray*}
As in the previous section we do not give proofs of the lemmata and proposition that follow. The proofs are in the spirit of the proofs given in section 4. From the
above calculations plus a little work we get the following
\begin{lemma} \label{coef4}
\begin{eqnarray*}
c_1(k) &=& a_{1,1}^{k_1} a_{2,1}^{k_2} a_{3,1}^{k_3} a_{4,1}^{k_4}\\
c_2(k) &=& c_1(k) \sum_{j=1}^{4} k_j\frac{a_{j,2}}{a_{j,1}}\\
c_3(k) &=& c_1(k) \bigg{(} \sum_{j=1}^{4} k_j (\frac{a_{j,3}}{a_{j,1}} - \frac{1}{2} \frac{a_{j,2}^2}{a_{j,1}^2}) + \frac{1}{2}(\sum_{j=1}^{4}
k_j\frac{a_{j,2}}{a_{j,1}})^2
            \bigg{)}\\
    &=& c_1(k) \bigg{(} \sum_{j=1}^{4} k_j (\frac{a_{j,3}}{a_{j,1}} - \frac{1}{2} \frac{a_{j,2}^2}{a_{j,1}^2}) + \frac{1}{2}\frac{c_2^2}{c_1(k)^2} \bigg{)}\\
c_4(k) &=& c_1(k) \bigg{(} \sum_{j=1}^{4} k_j (\frac{a_{j,4}}{a_{j,1}} - \frac{a_{j,2}a_{j,3}}{a_{j,1}^2} + \frac{1}{3}\frac{a_{j,2}^3}{a_{j,1}^3})\\
    &+& (\sum_{j=1}^{4}k_j \frac{a_{j,2}}{a_{j,1}}) (\sum_{j=1}^{4}k_j (\frac{a_{j,3}}{a_{j,1}} - \frac{1}{2} \frac{a_{j,2}^2}{a_{j,1}^2})) + \frac{1}{6 } (
         \sum_{j=1}^{4} k_j\frac{a_{j,2}}{a_{j,1}})^3 \bigg{)}\\
    &=& c_1(k) \bigg{(} \sum_{j=1}^{4} k_j (\frac{a_{j,4}}{a_{j,1}} - \frac{a_{j,2}a_{j,3}}{a_{j,1}^2} + \frac{1}{3}\frac{a_{j,2}^3}{a_{j,1}^3})\\
    &+& \frac{c_2(k)}{c_1(k)}(\frac{c_3(k)}{c_1(k)}-\frac{1}{2}\frac{c_2(k)^2}{c_1(k)^2})+\frac{1}{6}\frac{c_2(k)^3}{c_1(k)^3}\bigg{)}
\end{eqnarray*}
\end{lemma}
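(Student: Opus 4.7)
The formulas for $c_1(k)$ and $c_2(k)$ are already established by the preceding case-analysis on the constraint $k_{1,2}+\ldots+k_{4,2}+2(k_{1,3}+\ldots)+3(k_{1,4}+\ldots)=s$ for $s=0,1$, and the raw forms of $c_3(k)$ and $c_4(k)$ are likewise derived in the text. So the only thing the lemma actually asserts that is not already explicit is the algebraic regrouping of $c_3(k)$ and $c_4(k)$ into the compact shape on the right-hand side, plus the ``recursive'' restatements in terms of $c_2(k)/c_1(k)$ and $c_3(k)/c_1(k)$.

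My plan is to avoid re-expanding those sums by hand and instead to prove the lemma uniformly via the exponential of a nilpotent. Since $a_{i,1}\neq 0$ on a hypercyclic orbit, write each Toeplitz matrix as
\[
A_i \;=\; a_{i,1}\bigl(I + N_i\bigr), \qquad N_i \;=\; \sum_{j=2}^{4}\frac{a_{i,j}}{a_{i,1}}\,U_{j-1},
\]
so that $N_i$ is nilpotent with $N_i^{4}=0$. Because $U_1,U_2,U_3$ are polynomials in $U_1$ (hence commute), the matrices $I+N_i$ commute and each has a well-defined logarithm which terminates at the cubic term:
\[
L_i \;:=\; \log(I+N_i) \;=\; N_i - \tfrac{1}{2}N_i^{2} + \tfrac{1}{3}N_i^{3}.
\]
Expanding and collecting powers of $U_1$ yields $L_i = \alpha_i U_1 + \beta_i U_1^{2} + \gamma_i U_1^{3}$, where a direct calculation gives
\[
\alpha_i = \frac{a_{i,2}}{a_{i,1}},\quad \beta_i = \frac{a_{i,3}}{a_{i,1}}-\tfrac{1}{2}\frac{a_{i,2}^{2}}{a_{i,1}^{2}},\quad \gamma_i = \frac{a_{i,4}}{a_{i,1}}-\frac{a_{i,2}a_{i,3}}{a_{i,1}^{2}}+\tfrac{1}{3}\frac{a_{i,2}^{3}}{a_{i,1}^{3}},
\]
exactly the scalar coefficients appearing inside the sums $\sum_j k_j(\cdot)$ of the lemma.

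Using commutativity, $A_1^{k_1}A_2^{k_2}A_3^{k_3}A_4^{k_4} = c_1(k)\,\exp\!\bigl(\sum_{i=1}^{4} k_i L_i\bigr)$ with $c_1(k)=\prod_i a_{i,1}^{k_i}$. Set $M = S_1 U_1 + S_2 U_1^{2} + S_3 U_1^{3}$ where $S_\ell = \sum_{i=1}^{4} k_i$ times the coefficient of $U_1^{\ell}$ in $L_i$; then $M^{4}=0$, so $\exp(M)=I+M+\tfrac{1}{2}M^{2}+\tfrac{1}{6}M^{3}$. Reading off the $U_1, U_1^{2}, U_1^{3}$ coefficients of $\exp(M)$ gives, respectively, $S_1$, $S_2+\tfrac{1}{2}S_1^{2}$, and $S_3 + S_1 S_2 + \tfrac{1}{6}S_1^{3}$, which are precisely the bracketed expressions for $c_2(k)/c_1(k),\,c_3(k)/c_1(k),\,c_4(k)/c_1(k)$ claimed in Lemma~\ref{coef4}. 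The recursive forms then follow immediately on substituting $S_1 = c_2(k)/c_1(k)$ and $S_2 + \tfrac{1}{2}S_1^{2} = c_3(k)/c_1(k)$ into the cubic.

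The main (and essentially only) obstacle is bookkeeping: one must correctly identify the three contributions to the coefficient of $U_1^{3}$ in $\exp(M)$ --- the linear term $S_3$, the cross term $S_1 S_2$ coming from $\tfrac{1}{2}M^{2}$ (with the factor $2$ from $U_1\!\cdot\!U_1^{2}+U_1^{2}\!\cdot\!U_1$), and the cubic term $\tfrac{1}{6}S_1^{3}$ from $\tfrac{1}{6}M^{3}$. This is the step where, in the direct combinatorial approach of the earlier part of Section~6, the three case families (one index with $k_{j,4}=1$; a pair $(k_{j,3},k_{l,2})=1$; triples and higher powers of $k_{j,2}$) conspire to produce the same answer, and the exponential argument makes this transparent without any explicit case-splitting.
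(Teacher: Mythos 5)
Your argument is correct, and it verifies every line of the lemma, but it is a genuinely different route from the paper's. The paper obtains $c_1(k),\dots,c_4(k)$ by the multinomial expansion of $A_1^{k_1}A_2^{k_2}A_3^{k_3}A_4^{k_4}$ in powers of $U_1$, enumerating by hand the configurations with $\sum_j k_{j,2}+2\sum_j k_{j,3}+3\sum_j k_{j,4}=0,1,2,3$, and then passes from those raw sums to the compact forms of the lemma by an unexplained algebraic regrouping (``plus a little work''); no proof of the regrouping is given. You instead write $A_i=a_{i,1}\exp(L_i)$ with $L_i=\log(I+N_i)=N_i-\tfrac12 N_i^2+\tfrac13 N_i^3$ (a polynomial identity, since $N_i^4=0$), check that the $U_1,U_1^2,U_1^3$ coefficients of $L_i$ are exactly the quantities $\tfrac{a_{i,2}}{a_{i,1}}$, $\tfrac{a_{i,3}}{a_{i,1}}-\tfrac12\tfrac{a_{i,2}^2}{a_{i,1}^2}$, $\tfrac{a_{i,4}}{a_{i,1}}-\tfrac{a_{i,2}a_{i,3}}{a_{i,1}^2}+\tfrac13\tfrac{a_{i,2}^3}{a_{i,1}^3}$ appearing in the lemma, and read the answer off $\exp(S_1U_1+S_2U_1^2+S_3U_1^3)=I+M+\tfrac12 M^2+\tfrac16 M^3$; I have checked the coefficient bookkeeping ($S_1$, $S_2+\tfrac12 S_1^2$, $S_3+S_1S_2+\tfrac16 S_1^3$) and it matches the lemma, including the recursive restatements. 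What your approach buys is substantial: it is self-contained (you never need the paper's raw case-by-case expressions for $c_3$ and $c_4$, so there is no separate regrouping step to justify), it explains structurally why the inner coefficients are linear in $k$ (they are the coordinates of $\sum_i k_iL_i$ in the Lie algebra of the unipotent group), and it generalizes verbatim to $n\times n$ triangular Toeplitz matrices, whereas the paper's enumeration of configurations becomes unwieldy already at $n=4$. The only hypotheses you use --- $a_{i,1}\neq 0$ and the commutativity of polynomials in $U_1$ --- are exactly those the paper also needs (cf.\ the proviso before Lemma \ref{coef2}), so nothing is lost.
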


\begin{lemma} \label{equivhyp4}
The tuple $(A_1,A_2,A_3,A_4)$ is hypercyclic if and only if the set
\[
\left\{\left(
\begin{array}{c}
\sum_{j=1}^{4} k_j (\frac{a_{j,4}}{a_{j,1}} - \frac{a_{j,2}a_{j,3}}{a_{j,1}^2} + \frac{1}{3}\frac{a_{j,2}^3}{a_{j,1}^3})\\
\sum_{j=1}^{4} k_j (\frac{a_{j,3}}{a_{j,1}} - \frac{1}{2} \frac{a_{j,2}^2}{a_{j,1}^2})  \\
\sum_{j=1}^{4} k_j\frac{a_{j,2}}{a_{j,1}}\\
a_{1,1}^{k_1} a_{2,1}^{k_2} a_{3,1}^{k_3} a_{4,1}^{k_4}
\end{array}
\right) : k_1, k_2, k_3, k_4\in \mathbb{N}\right\}
\]
is dense in $\mathbb{C}^4$.
\end{lemma}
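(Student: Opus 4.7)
The plan is to mimic step by step the argument already given in the $2\times 2$ case (Lemma~\ref{equivhyp2}), using the explicit formulas of Lemma~\ref{coef4} to convert the orbit-density statement for the product $C(k)=A_1^{k_1}A_2^{k_2}A_3^{k_3}A_4^{k_4}$ into density of the four-vector in the statement.

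For the forward direction, suppose $(A_1,A_2,A_3,A_4)$ is hypercyclic with hypercyclic vector $z=(z_1,z_2,z_3,z_4)^T$. Since $C(k)$ is upper triangular Toeplitz, the fourth coordinate of $C(k)z$ equals $c_1(k)z_4$, so density of the orbit in $\mathbb{C}^4$ already forces $z_4\neq 0$ and density of $\{c_1(k):k\in\mathbb{N}^4\}$ in $\mathbb{C}$; in particular every $a_{j,1}$ must be nonzero, otherwise $c_1(k)=0$ whenever $k_j\geq 1$. Reading the entries of $C(k)z$ from the bottom up, one peels off successively the convergence of $c_1(k(n))$, then of $c_2(k(n))$, then of $c_3(k(n))$, and finally of $c_4(k(n))$ to arbitrarily prescribed targets; equivalently, the convergence of $c_1(k(n))$ together with that of the three ratios $c_2/c_1$, $c_3/c_1$, $c_4/c_1$.

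Applying Lemma~\ref{coef4} to each ratio yields a triangular polynomial relation whose ``diagonal'' terms are precisely the three sums $\sum_j k_j(a_{j,2}/a_{j,1})$, $\sum_j k_j(a_{j,3}/a_{j,1}-\tfrac12 a_{j,2}^2/a_{j,1}^2)$ and $\sum_j k_j(a_{j,4}/a_{j,1}-a_{j,2}a_{j,3}/a_{j,1}^2+\tfrac13 a_{j,2}^3/a_{j,1}^3)$, corrected by polynomials in the earlier ratios. Solving this triangular system by back-substitution, one concludes that the ratios can be driven to arbitrary limits iff the three sums can be driven to arbitrary limits, while $c_1(k)=a_{1,1}^{k_1}a_{2,1}^{k_2}a_{3,1}^{k_3}a_{4,1}^{k_4}$ directly furnishes the fourth coordinate of the candidate set. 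Hence hypercyclicity of $(A_1,A_2,A_3,A_4)$ yields density of the set in $\mathbb{C}^4$.

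The converse is obtained by reversing the chain: given density of the four-vector in $\mathbb{C}^4$, fix an arbitrary target in $\mathbb{C}^4$, produce sequences $\{k(n)\}$ realizing the prescribed limits of $c_1(k(n))$ and of the three sums, then use Lemma~\ref{coef4} to read off the limits of $c_2(k(n)),c_3(k(n)),c_4(k(n))$ and finally recover the convergence of $C(k(n))z$ to the target for a suitable candidate vector $z$ (the choice $z=(0,0,0,1)^T$ is convenient). The only nontrivial technical point is the triangular polynomial back-substitution in the ratios $c_j/c_1$, which is routine, so no new ideas are needed beyond those already employed in section~4.
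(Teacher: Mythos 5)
Your proposal is correct and follows essentially the same route as the paper: the paper explicitly omits the proof of Lemma \ref{equivhyp4}, stating it is ``in the spirit of'' the $2\times 2$ argument of Lemma \ref{equivhyp2}, and your peeling-off of the coordinates of $C(k)z$ from the bottom up, together with the triangular back-substitution through the identities of Lemma \ref{coef4} (and the choice $z=(0,0,0,1)^T$ for the converse), is exactly that argument carried out in dimension four.
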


\begin{proposition}
The tuple $(A_1,A_2,A_3,A_4)$ is not hypercyclic.
\end{proposition}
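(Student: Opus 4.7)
The plan is to follow the template laid down in sections~4 and~5: first invoke Lemma~\ref{equivhyp4} to recast hypercyclicity of $(A_1,A_2,A_3,A_4)$ as density of an explicit set in $\mathbb{C}^4$, and then apply the complex exponential coordinate-wise to the first three slots in order to expose an underlying $4$-tuple of diagonal $4\times 4$ matrices whose hypercyclicity would contradict Feldman's theorem in \cite{Feldman}.

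Assume for contradiction that $(A_1,A_2,A_3,A_4)$ is hypercyclic. Just as in the $2\times 2$ case, this forces $a_{j,1}\neq 0$ for every $j=1,2,3,4$; otherwise $c_1(k)=\prod_j a_{j,1}^{k_j}$ would vanish for all $k$ with $k_j\geq 1$, reducing the bottom row of $C(k)$ to $(0,0,0,0)$ and making density of any orbit impossible. Lemma~\ref{equivhyp4} then gives density in $\mathbb{C}^4$ of the set
\[
S=\left\{\left(\sum_{j=1}^{4}k_j\beta_{j,4},\ \sum_{j=1}^{4}k_j\beta_{j,3},\ \sum_{j=1}^{4}k_j\beta_{j,2},\ \prod_{j=1}^{4}a_{j,1}^{k_j}\right) : k_1,\dots,k_4\in\mathbb{N}\right\},
\]
where $\beta_{j,2}$, $\beta_{j,3}$, $\beta_{j,4}$ are the (fixed) rational expressions in the $a_{j,i}$'s read off from that lemma.

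Applying $\exp$ coordinate-wise to the first three entries, and using that $\exp\colon\mathbb{C}\to\mathbb{C}^*$ is a continuous open surjection (so density transfers), I would obtain density in $\mathbb{C}^4$ of
\[
\left\{\left(\prod_{j=1}^{4}(e^{\beta_{j,4}})^{k_j},\ \prod_{j=1}^{4}(e^{\beta_{j,3}})^{k_j},\ \prod_{j=1}^{4}(e^{\beta_{j,2}})^{k_j},\ \prod_{j=1}^{4}a_{j,1}^{k_j}\right) : k_1,\dots,k_4\in\mathbb{N}\right\}.
\]
This is exactly the orbit of $(1,1,1,1)\in\mathbb{C}^4$ under the $4$-tuple of diagonal matrices $D_j=\mathrm{diag}(e^{\beta_{j,4}},\,e^{\beta_{j,3}},\,e^{\beta_{j,2}},\,a_{j,1})$, $j=1,2,3,4$. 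Hence $(D_1,D_2,D_3,D_4)$ would be a hypercyclic $4$-tuple of diagonal $4\times 4$ matrices on $\mathbb{C}^4$, contradicting Feldman's theorem \cite{Feldman}. The only delicate point—and the closest thing to an obstacle—is the density-transfer through $\exp$ on the first three coordinates, which is immediate from $\exp$ being an open map onto $\mathbb{C}^*$ together with the density of $\mathbb{C}^*$ in $\mathbb{C}$. Otherwise the proof is structurally identical to the $2\times 2$ and $3\times 3$ cases, the only new ingredient being the heavier combinatorial bookkeeping already absorbed into Lemma~\ref{coef4}.
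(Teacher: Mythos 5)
Your argument is exactly the one the paper intends: it omits the proof, stating that it follows the section~4 template, which is precisely what you carry out---pass from Lemma~\ref{equivhyp4} to the dense set of linear forms and the product $\prod_j a_{j,1}^{k_j}$, exponentiate the first three coordinates, and contradict Feldman's theorem on tuples of diagonal matrices. The density transfer you worry about needs only continuity of $\exp$ (the image of a dense set is dense in the closure of the image, and $(\mathbb{C}^*)^3\times\mathbb{C}$ is dense in $\mathbb{C}^4$), so your proof is correct and essentially identical to the paper's.
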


\section{Open problems}

\noindent \textbf{Problem 1}. What is the minimal number of complex upper triangular Toeplitz matrices that form a hypercyclic tuple in $\mathbb{C}^n$? Observe that
in \cite{CoHaMa} the authors have shown the existence of an eight tuple of complex upper triangular $2\times 2$ Toeplitz matrices being hypercyclic in $\mathbb{C}^2$,
which is far from being optimal.

\medskip

\noindent \textbf{Problem 2}. What is the minimal number of real upper triangular Toeplitz matrices that form a hypercyclic tuple in $\mathbb{R}^n$? In \cite{CoPa}
the authors have shown that the minimal number for $2\times 2$ matrices is $3$.

\medskip

\noindent \textbf{Problem 3}. In \cite{CoHaMa} an example of a two-tuple $(A_1,A_2)$ of commuting $2\times 2$ matrices is given in which every non-zero vector in
$\mathbb{R}^2$ is hypercyclic for $(A_1,A_2)$. In other words we say that the $\mathbb{N}^2$ action is minimal. Does the above extend in any dimension $n\geq 3$? That
is, for $n\geq 3$, does there exist a hypercyclic $n$-tuple $(A_1, \ldots ,A_n)$ of commuting $n\times n$ matrices for which every non-zero vector $x$ in
$\mathbb{R}^n$ is hypercyclic?


\begin{thebibliography}{99}
\bibitem{AM} H. Abels and A. Manoussos, \textit{Group generators and hypercyclic tuples of matrices}, preprint.

\bibitem{A} A. Ayadi, \textit{Hypercyclic abelian semigroup of matrices on $\mathbb{C}^n$ and $\mathbb{R}^n$ and $k$-transitivity ($k\geq 2$)}, preprint.

\bibitem{AM1} A. Ayadi and H. Marzougui, \textit{Dynamic of Abelian subgroups of $GL(n,\mathbb{C})$: a structure Theorem}, Geometria Dedicata {\bf 116} (2005), 111-127.

\bibitem{AM2} A. Ayadi and H. Marzougui, \textit{Dense orbits for abelian subgroups of $GL(n,\mathbb{C})$}, Foliations 2005: World Scientific, Hackensack, NJ (2006), 47-69.

\bibitem{BM} F. Bayart, \'E. Matheron, \textit{Topics in linear dynamics}, Cambridge Tract in Math. 179, Cambridge University Press, 2009.

\bibitem{CoHaMa} G. Costakis, D. Hadjiloucas and A. Manoussos, \textit{Dynamics of tuples of matrices}, Proc. Amer. Math. Soc. {\bf 137} (2009), 1025-1034.

\bibitem{CoPa} G. Costakis and I. Parissis, \textit{Dynamics of tuples of matrices in Jordan form}, arXiv:1003.5321v2
[math.FA].

\bibitem{Feldman1} N. S. Feldman, \textit{Hypercyclic Tuples of Operators},
Vol. 3, Oberwolfach Rep. 3 (2006), no. 3, 2254-2256. Mini-workshop: Hypercyclicity and Linear Chaos. Abstracts from the mini-workshop held August 13-19, 2006.
Organized by Teresa Bermudez, Gilles Godefroy, Karl-G. Grosse-Erdmann and Alfredo Peris. Oberwolfach Reports. Vol. 3, no. 3. Oberwolfach Rep. 3 (2006), no. 3,
2227-2276.

\bibitem{Feldman2} N. S. Feldman, \textit{Hypercyclic pairs of coanalytic Toeplitz operators},
Integral Equations Operator Theory {\bf 58} (2007), 153-173.

\bibitem{Feldman} N. S. Feldman, \textit{Hypercyclic tuples of operators \&
somewhere dense orbits}, J. Math. Anal. Appl. {\bf 346} (2008), 82-98.

\bibitem{GE} K. -G. Grosse-Erdmann, \textit{Universal families and
hypercyclic operators}, Bull. Amer. Math. Soc. {\bf 36} (1999), 345-381.

\bibitem{GE2} K. -G. Grosse-Erdmann, \textit{Recent developments in
hypercyclicity}, RACSAM Rev. R. Acad. Cienc. Exactas Fis. Nat. Ser. A Mat. (2003), 273-286.

\bibitem{GE3} K. -G. Grosse-Erdmann, \textit{Dynamics of linear operators}, Topics in complex analysis and operator theory, 41-84,
Univ. Malaga, Malaga, 2007.

\bibitem{GP} K. -G. Grosse-Erdmann and A. Peris, Linear Chaos, Universitext, Springer, to appear.

\bibitem{J1} M. Javaheri, \textit{Topologically transitive semigroup actions of real linear fractional transformations}, J. Math. Anal. Appl. to appear.

\bibitem{J2} M. Javaheri, \textit{Semigroups of matrices with dense orbits}, preprint.

\bibitem{Ker} L. Kerchy, \textit{Cyclic properties and stability of commuting power bounded operators},
Acta Sci. Math. (Szeged) {\bf 71} (2005), 299-312.
\end{thebibliography}
\end{document}